\newcommand{\K}[1]{\mathbb{#1}}
\newcommand{\txt}[1]{\text{\normalfont{#1}}}
\newtheorem{thm}{Theorem}
\newtheorem*{thm*}{Theorem}
\newtheorem{remark}{Remark}
\newtheorem{propo}{Proposition}
\newtheorem{lemma}{Lemma}
\title{On fully real eigenconfigurations of tensors}
\author{Khazhgali Kozhasov}
\date{}
\begin{document}
\maketitle

\begin{changemargin}{1.5cm}{1.5cm}
{\bf\noindent Abstract.}
We construct generic real symmetric tensors with only real eigenvectors or, equivalently, real homogeneous polynomials with the maximum possible finite number of critical points on the sphere.
\end{changemargin}

\section*{Introduction}

In $2005$ Lim \cite{Lim} and Qi \cite{Qi} independently initiated the spectral theory of high order tensors. They introduced several generalizations of the classical concept of an eigenvector of a matrix. Our work concerns $l^2$-eigenvectors of Lim or $Z$-eigenvectors of Qi. 

Let $A= (a_{i_1\dots i_d})_{i_j=1}^{n},\, a_{i_1\dots i_d} \in \K{C}$ be an $n$-dimensional tensor of order $d$ ($n^d$-tensor). A non-zero vector $x\in \K{C}^{n}\setminus \{0\}$ is called an \emph{eigenvector} of $A$ if there exists $\lambda \in \K{C}$, the corresponding \emph{eigenvalue}, such that
\begin{align*}
  Ax^{d-1} = \lambda x,\quad Ax^{d-1}:=\left(\sum\limits_{i_2,\dots,i_d = 1}^n a_{1i_2\dots i_d} x_{i_2}\cdots x_{i_d},\,\dots\,,\sum\limits_{i_2,\dots,i_d = 1}^n a_{ni_2\dots i_d} x_{i_2}\cdots x_{i_d}\right).
\end{align*}
For $d=2$ one recovers the definition of an eigenvector of an $n\times n$ matrix $A=(a_{i_1i_2})_{i_j=1}^n$. The point $[x] \in \K{C}P^{n-1}$ defined by an eigenvector $x \in \K{C}^n\setminus \{0\}$ is called an \emph{eigenpoint} and the set of all eigenpoints is called an \emph{eigenconfiguration}.

An $n^d$-tensor $A = (a_{i_1\dots i_d})_{i_j=1}^n,\, a_{i_1,\dots,i_d}\in \K{C}$ is said to be \emph{symmetric} if $a_{i_{\sigma_1}\dots i_{\sigma_d}} = a_{i_1\dots i_d}$ for any permutation $\sigma \in S_d$.
Cartwright and Sturmfels \cite{CarStu} proved that the number of eigenpoints of a generic (symmetric) $n^d$-tensor is equal to 
\begin{align*}
m_{d,n} := \frac{(d-1)^n-1}{d-2} = (d-1)^{n-1}+\dots + (d-1) + 1
\end{align*} 
The same count holds for generic real (symmetric) tensors (i.e. tensors with real entries) but, except for the case of real symmetric matrices ($d=2$), not all eigenvectors of a general real symmetric tensor of order $d\geq 3$  are real. In fact, ``most''\footnote{As it often happens in real algebraic geometry problems the objects of ``maximal complexity'' are rare and ``numerically invisible''.} of real symmetric tensors have eigenpoints in $\K{C}P^{n-1}\setminus \K{R}P^{n-1}$. Abo, Seigal and Sturmfels  conjectured \cite[Conjecture 6.5]{ASS} that for any $n\geq 2, d\geq 1$ there exists a generic real symmetric $n^d$-tensor having only real eigenvectors and proved it for $n=3, d\geq 1$ and for $n=d=4$. The cases $n=2, d\geq 1$ and $n\geq 2, d=2$ are elementary, the case of general $n,d$ was unknown (see for example \cite{Stu}).
In the present work we cover the case of arbitrary $n$ and $d$, proving the following theorem.
\begin{thm}\label{thm 1}
For any $n\geq 2$ and $d\geq 1$ there exists a generic real symmetric $n^d$-tensor all of whose $m_{d,n}$ eigenpoints are real. Equivalently (see Section \ref{hp}), there exists a real homogeneous polynomial of degree $d$ in $n$ variables whose restriction to the sphere $S^{n-1} \subset \K{R}^n$ has the maximum possible finite number of critical points that is equal to $2m_{d,n}$. Moreover, such a symmetric tensor (homogeneous polynomial) exists among traceless tensors (harmonic polynomials).
\end{thm}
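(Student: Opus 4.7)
My approach is induction on the number of variables $n \geq 2$, with $d \geq 1$ fixed. For the base case $n = 2$, the harmonic Chebyshev-type polynomial $C_d(x_1, x_2) := \txt{Re}\bigl((x_1 + ix_2)^d\bigr)$ restricts on $S^1$ to $\cos(d\theta)$ and therefore has exactly $2d = 2m_{d,2}$ non-degenerate real critical points; harmonicity is automatic since $C_d$ is the real part of a holomorphic polynomial.

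For the inductive step, assume a harmonic polynomial $f_0$ of degree $d$ in $n-1$ variables has $2m_{d,n-1}$ non-degenerate real critical points on $S^{n-2}$. Viewing $f_0$ as independent of $x_n$, I would consider the one-parameter family $f_\epsilon := f_0 + \epsilon h$ for a suitable harmonic polynomial $h$ of degree $d$ in $n$ variables. At $\epsilon = 0$, the critical set of $f_0|_{S^{n-1}}$ decomposes into the $2m_{d,n-1}$ equatorial critical points (on $\{x_n = 0\} \cap S^{n-1}$) and the two highly degenerate critical points at the poles $(0, \ldots, 0, \pm 1)$, where the Hessian of $f_0|_{S^{n-1}}$ vanishes identically for $d \geq 3$. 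Arranging $h$ so that $\partial_{x_n} h$ vanishes on the equatorial critical set preserves those critical points for small $\epsilon$ as real and non-degenerate; the remaining $2(d-1)^{n-1} = 2m_{d,n} - 2m_{d,n-1}$ critical points must then bifurcate from the two poles.

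A local analysis in sphere coordinates $y \in \K{R}^{n-1}$ near a pole, with the rescaling $y = \epsilon^{1/(d-1)} z$, reduces the bifurcation problem at leading order to the polynomial system
\begin{equation*}
\nabla f_0(z) \;=\; c,
\end{equation*}
consisting of $n-1$ equations of degree $d-1$ in $n-1$ variables, with $c \in \K{R}^{n-1}$ determined by the first derivatives of $h$ at the pole. By B\'ezout the number of complex solutions is at most $(d-1)^{n-1}$, matching exactly the required count. The main obstacle is to arrange that \emph{all} of these complex solutions be real: this is a genuine real-algebraic condition on the pair $(f_0, c)$ that does not come for free. For example, for $f_0 = C_d$ the map $\nabla f_0$ is essentially $w \mapsto \overline{w}^{\,d-1}$ on $\K{C}$, whose fibers over real points are not fully real for $d \geq 3$. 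I would overcome this by strengthening the inductive hypothesis to require also that $\nabla f_0$ admit a fully real regular value, and by constructing $f_0$ from a more flexible brick than Chebyshev, for instance a decoupled model of the form $\sum_i p_i(x_i)$ with carefully chosen univariate polynomials whose derivatives have only real zeros. Finally, harmonicity of $f_\epsilon$ can be restored a posteriori by subtracting a lower-order Laplacian projection of $h$, a correction that does not affect the critical-point count at leading order in $\epsilon$.
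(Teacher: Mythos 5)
Your base case agrees with the paper (a harmonic polynomial restricting to $\cos(d\theta)$ on $S^1$), but your inductive step reverses the roles of the two pieces and this creates a genuine gap that you identify but do not close. You take the inductive polynomial $f_0$ as the \emph{main} term and add a small perturbation $\epsilon h$, so the $2m_{d,n-1}$ equatorial critical points are inherited cheaply, while the missing $2(d-1)^{n-1}$ critical points must be produced by bifurcation from the two degenerate poles. The rescaled local system $\nabla f_0(z)=c$ has $(d-1)^{n-1}$ solutions by B\'ezout, but as you note yourself there is no reason for all of them to be real, and indeed for $f_0$ equal to the Chebyshev harmonic they are not. Strengthening the inductive hypothesis to ask that $\nabla f_0$ have a fully real regular value, or replacing $f_0$ by a decoupled model $\sum_i p_i(x_i)$, is offered only as a direction; neither step is carried out, and the decoupled model would also have to be shown to have $2m_{d,n-1}$ critical points and to be compatible with harmonicity, none of which is established. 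As written the proof does not go through.

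The paper's construction dissolves this difficulty by swapping which function is the perturbation. It sets $M_{d,n+1}=Z_{d,n+1}+\varepsilon\,\hat M_{d,n}$, where the zonal harmonic $Z_{d,n+1}$ is the \emph{main} term. By Lemma \ref{zonal}, $Z_{d,n+1}$ already has two non-degenerate poles and $d-1$ non-degenerate critical \emph{circles} (spheres) at latitudes $\alpha_{d,i}$. On each such latitude sphere the perturbation $\hat M_{d,n}$ restricts to a positive multiple of $M_{d,n}$, so by the inductive hypothesis each circle breaks into exactly $2m_{d,n}$ real non-degenerate critical points -- reality is inherited rather than needing to be argued by a separate real-algebraic condition. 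The count then becomes $2+(d-1)\cdot 2m_{d,n}=2m_{d,n+1}$, and the Hessian is block-diagonal in spherical coordinates, so non-degeneracy is immediate. In short: the paper bifurcates from a transverse, already-nondegenerate critical manifold carrying the full inductive structure, whereas your proposal bifurcates from the degenerate poles, where the reality of the $(d-1)^{n-1}$-fold degeneracy is precisely the crux and is left open.
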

In \cite{Gichev} Gichev constructed for any $d\geq 1$ a homogeneous harmonic polynomial of degree $d$ in $3$ variables having $2m_{d,3} = 2(d^2-d+1)$ critical points on the sphere $S^2$. The idea of the proof of Theorem \ref{thm 1} is based on the construction of Gichev.

\subsubsection*{Applications}
Tensors and their eigenvectors arise in many areas of research in pure mathematics and the applied sciences. Below we discuss two problems to which our work is closely related.

\emph{Low rank approximations} (\cite{Chen, FO}). The problem of approximating a general tensor by a tensor of lower rank appears, for example, in signal processing \cite{KR}; see \cite{GK} and references therein for some other applications. Eigenvectors and eigenvalues of a symmetric tensor can be used to find its best rank one approximation. A real symmetric $n^d$-tensor $A=(a_{i_1\dots i_d})_{i_j=1}^n$ is said to be of \emph{rank one} if $a_{i_1\dots i_d} =\lambda x_{i_1}\cdots x_{i_d}$ for some vector $x\in S^{n-1}$ and constant $\lambda\in \K{R}$. Consider the set 
\begin{align*}
  X_{d,n} : = \{ \lambda (x_{i_1}\cdots x_{i_d})_{i_j=1}^n : \lambda \in \K{R}, x\in S^{n-1}\}
\end{align*}
of real symmetric $n^d$-tensors of rank one and for a given real symmetric $n^d$-tensor $A=(a_{i_1\dots i_d})_{i_j=1}^n$ define the function:
\begin{align*}
\txt{dist}_A: X_{d,n} &\rightarrow \K{R}\\  
 \lambda (x_{i_1}\cdots x_{i_d})_{i_j=1}^n &\mapsto \sum\limits_{i_1,\dots, i_d=1}^n(a_{i_1\dots i_d} - \lambda x_{i_1}\cdots x_{i_d})^2
\end{align*}

\noindent
(this function measures the euclidean distance of a rank one tensor from $A$).

A rank one tensor $\lambda(x_{i_1}\cdots x_{i_d})_{i_1,=1}^n\in X_{d,n}$ is a critical point of $\txt{dist}_A$ if and only if $x\in S^{n-1}$ is a unit eigenvector of $A$ and $\lambda\in \K{R}$ is the corresponding eigenvalue. In this context a \emph{best rank one approximation} to $A$, a tensor $\lambda(x_{i_1}\cdots x_{i_d})_{i_j=1}^n\in X_{d,n}$ which is a global minimizer of $\txt{dist}_A$, corresponds to the greatest (in absolute value) eigenvalue $|\lambda|$ \cite[Thm. 2.19]{QiLuo}.

Our Theorem \ref{thm 1} is then equivalent to the existence for any $n\geq 2, d\geq 1$ of a real symmetric $n^d$-tensor $A$ such that the function $\txt{dist}_A: X_{d,n}\rightarrow\K{R}$ has the maximum possible number of critical points that is equal to $m_{d,n}$.

\begin{remark}
The problem of finding a best rank one approximation to a real symmetric $n^d$-tensor $A=(a_{i_1\dots i_d})_{i_j=1}^n$ is equivalent to the problem of maximizing the absolute value $|f_A(x)|$ of the homogeneous polynomial $f_A(x) = \sum_{i_j=1}^n a_{i_1\dots i_d}x_{i_1}\cdots x_{i_d}$ constrained on the sphere $S^{n-1}$.
\end{remark}
\emph{Complex dynamics} (\cite{FS, Rob}). Let $f: \K{C}P^{n-1} \rightarrow \K{C}P^{n-1}$ be a non constant holomorphic map. Then in homogeneous coordinates we can write $f=[f_1: \cdots : f_n ]$, where $$f_i(x)=\sum_{i_2,\dots,i_d=1}^n a_{i i_2\dots i_n} x_{i_2}\cdots x_{i_d},\ i=1,\dots,n$$ are complex homogeneous polynomials of certain degree $d-1$ having no common zeroes in $\K{C}P^{n-1}$. Moreover, the polynomials $f_1,\dots,f_n$ are determined uniquely up to a common constant multiple. It is straightforward to see that the fixed points $\{x\in \K{C}P^{n-1}: f(x) = x\}$ of ${f=[f_1:\cdots:f_n]}$ are precisely the eigenpoints $x\in \K{C}P^{n-1}$ of the tensor ${A=(a_{i_1\dots i_d})_{i_j=1}^n}$ and hence their number for a generic map $f$ is equal to $m_{d,n}$.

When the polynomials $f_1,\dots,f_n$ are real, $f=[f_1:\cdots:f_n]$ preserves $\K{R}P^{n-1}\subset \K{C}P^{n-1}$ and the real fixed points of this map are precisely the real eigenpoints of $A$. Our results imply that for some generic real map $f$ all of its (a priori complex) fixed points are real.
 
\section{Preliminaries}
\subsection{Morse functions}

Let $f$ be a smooth function on a smooth manifold $M$. A critical point $x\in M$ of $f$ is said to be \emph{non-degenerate} if the \emph{Hessian matrix} $\left( \frac{\partial^2 f}{\partial x_i\partial x_j}(x)\right)$ of $f$ at $x$ is non-singular. A smooth function $f: M\rightarrow \K{R}$ with only non-degenerate critical points is called \emph{Morse}. Non-degenerate critical points are isolated, hence on a compact manifold a Morse function can have only finitely many critical points.

\subsection{From symmetric tensors to homogeneous polynomials}\label{hp}
A generic symmetric $n\times n$ matrix has $n$ real simple eigenvalues and $n$ corresponding  eigenpoints. Moreover, in the space of all symmetric $n\times n$ matrices those which have repeated eigenvalues form a real algebraic subvariety, that we call the \emph{discriminant}, and a generic matrix belongs to its complement. The codimension of the discriminant is two and this justifies the fact that the number of real eigenpoints is the same for all generic matrices.

Let $A\hspace{-1pt}=\hspace{-2pt} (a_{i_1\dots i_d})_{i_j=1}^n,\, a_{i_1\dots i_d}\hspace{-3pt} \in\hspace{-3pt} \K{R}$ be an $n$-dimensional symmetric tensor of order $d$. Recall that a complex number $\lambda \in \K{C}$ is an \emph{eigenvalue} associated to an eigenvector $x\in \K{C}^n$ if $Ax^{d-1} = \lambda x$. In this case the pair $(x,\lambda)\in \K{C}^n\setminus\{0\}\times \K{C}$ is called an \emph{eigenpair} of $A$. Two eigenpairs $(x,\lambda)$ and $(x^{\prime},\lambda^{\prime})$ of $A$ are said to be equivalent if they define the same eigenpoint $[x]=[x^{\prime}] \in \K{C}P^{n-1}$. Theorem $1.2$ in \cite{CarStu} asserts that the number of eigenpoints (equivalence classes of eigenpairs) of a generic symmetric $n^d$-tensor is equal to $m_{d,n} = ((d-1)^n-1)/(d-2) = (d-1)^{n-1}+\dots+(d-1)+1$. \emph{Non-generic} tensors are cut out by an algebraic hypersurface, called the \emph{eigendiscriminant} \txt{\cite{ASS}}, and the number of eigenpoints of a non-generic tensor is not equal to the expected $m_{d,n}$. On each connected component of the complement of the eigendiscriminant the number of real eigenpoints (equivalence classes of real eigenpairs) is constant.

There is a well-known one-to-one correspondence between the set $\mathcal{P}_{d,n}$ of real homogeneous polynomials of degree $d$ in $n$ variables and the set of real symmetric $n^d$-tensors:
\begin{align}\label{corr}
f_A = \sum\limits_{i_1,\dots,i_d=1}^n a_{i_1\dots i_d}x_{i_1}\dots x_{i_d} \quad \longleftrightarrow\quad A = (a_{i_1\dots i_d})_{i_j=1}^n
\end{align}
The critical points of the restriction $f_A|_{S^{n-1}}$ of a homogeneous polynomial $f_A$ to the unit sphere are precisely unit real eigenvectors of the corresponding symmetric tensor $A$. Indeed, by the method of Lagrange multipliers, if $x\in S^{n-1}$ then
\begin{align*}
\txt{d}_x f_A|_{S^{n-1}} = 0 \quad \Leftrightarrow \quad  \txt{d}_x f_A = \lambda\, \txt{d}_x \left(\frac{\Vert x \Vert^2-1}{2}\right) \quad \Leftrightarrow \quad  Ax^{d-1} = (\lambda/d) x
\end{align*}
Note that the Lagrange multiplier $\lambda$ corresponds to the eigenvalue $\lambda/d$ associated to the unit eigenvector $x$. In the terminology of Lim \cite{Lim} and Qi \cite{Qi} unit real eigenvectors are $l^2$-eigenvectors and $Z$-eigenvectors respectively. Theorem \cite[Thm.\,1.2.]{CarStu} thus gives an upper bound on the number of critical points of the restriction of a homogeneous polynomial to the sphere.
\begin{lemma}\label{bound}
  If a polynomial $f\in \mathcal{P}_{d,n}$ defines a Morse function $f|_{S^{n-1}}$ on the sphere then the number of critical points of $f|_{S^{n-1}}$ is bounded by $2m_{d,n} = 2((d-1)^n-1)/(d-2)$.  
\end{lemma}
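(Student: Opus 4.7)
\emph{Plan.} My approach combines three ingredients: the correspondence in Section \ref{hp} between critical points of $f_A|_{S^{n-1}}$ and unit real eigenvectors of the associated tensor $A$, the Cartwright--Sturmfels count $m_{d,n}$ of complex eigenpoints of a generic symmetric tensor, and the stability of non-degenerate critical points of a Morse function under small $C^2$-perturbations.

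First I would establish a $2$-to-$1$ correspondence between critical points of $f_A|_{S^{n-1}}$ and real eigenpoints of $A$. By Section \ref{hp} such critical points are precisely the unit real eigenvectors of $A$, and the identity $A(-x)^{d-1} = (-1)^{d-1} Ax^{d-1}$ implies that the antipodal map sends unit eigenvectors to unit eigenvectors; hence they come in pairs $\{x,-x\}$ descending to a single real eigenpoint $[x] \in \K{R}P^{n-1} \subset \K{C}P^{n-1}$. The total number of critical points on $S^{n-1}$ therefore equals twice the number of real eigenpoints of $A$, and it suffices to bound the latter by $m_{d,n}$.

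To get this bound I would reduce to the generic case by perturbation. Let $N$ denote the number of critical points of $f|_{S^{n-1}}$, fix a symmetric $n^d$-tensor $B$ such that the line $\{A + \epsilon B : \epsilon \in \K{R}\}$ is not contained in the eigendiscriminant (possible because the latter is a proper real algebraic hypersurface), and let $g \in \mathcal{P}_{d,n}$ be the polynomial corresponding to $B$. Then for all but finitely many $\epsilon \in \K{R}$ the tensor $A+\epsilon B$ is generic, so by Cartwright--Sturmfels it has exactly $m_{d,n}$ complex eigenpoints and thus at most $m_{d,n}$ real ones. On the other hand, because $f|_{S^{n-1}}$ is Morse, each of its critical points is a non-degenerate zero of the intrinsic gradient vector field, and the implicit function theorem guarantees that for all sufficiently small $\epsilon$ each of these $N$ zeros continues to a distinct critical point of $(f + \epsilon g)|_{S^{n-1}}$. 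Choosing $\epsilon$ that simultaneously lies outside the finite bad set and inside the IFT neighborhood, the previous paragraph applied to $A + \epsilon B$ yields $N \leq 2 m_{d,n}$, as desired.

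The main (and essentially only) subtlety is the compatibility of the two smallness conditions on $\epsilon$, which is immediate: the line $\{A + \epsilon B\}$ meets the eigendiscriminant in a discrete, hence nowhere dense, set of values, so a suitable $\epsilon$ can always be found inside any neighborhood of $0$.
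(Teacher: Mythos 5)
Your proof is correct and follows essentially the same strategy as the paper: make explicit the $2$-to-$1$ correspondence between critical points on $S^{n-1}$ and real eigenpoints, perturb a non-generic tensor off the eigendiscriminant, and use stability of the $N$ non-degenerate critical points under small perturbation to transfer the Cartwright--Sturmfels bound. The only cosmetic difference is that you invoke the implicit function theorem directly (yielding the one-sided ``at least $N$ critical points survive,'' which suffices) whereas the paper cites a Morse-stability result from Banyaga--Hurtubise giving the sharper ``exactly the same number.''
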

\begin{proof}
 If $f_A|_{S^{n-1}}$ is a Morse function and the tensor $A$ is generic then $A$ has $m_{d,n}$ eigenpoints in $\K{C}P^{n-1}$ which implies that the number of unit real eigenvectors of $A$ (that is equal to the number of critical points of $f_A|_{S^{n-1}}$) is bounded by $2m_{d,n}$. 
 
 Suppose now that $f_A|_{S^{n-1}}$ is a Morse function but the tensor $A$ is not generic. Since non-generic tensors form a hypersurface in the space of symmetric tensors any open neighbourhood of $A$ contains a generic tensor $\tilde{A}$. Moreover, if $\tilde A$ is sufficiently close to $A$ by \cite[Cor. 5.24]{BanHur} the function $f_{\tilde A}|_{S^{n-1}}$ is Morse and it has the same number of critical points as $f_{A}|_{S^{n-1}}$. 
 \end{proof}

\subsection{Spherical harmonics}
Consider the space 
$$\mathcal{H}_{d,n} = \left\{f|_{S^{n-1}}: f \in \mathcal{P}_{d,n},\ \frac{\partial^2f}{\partial x_1^2}+\cdots+\frac{\partial^2f}{\partial x_n^2} = 0\right\}$$
of restrictions to the sphere $S^{n-1}$ of homogeneous harmonic poynomials of degree $d$. 
Note that a polynomial $f_A\in \mathcal{P}_{d,n}$ is harmonic if and only if the symmetric tensor $A$ is \emph{traceless}, i.e.
$$\sum\limits_{i=1}^n a_{ii\, i_{3} \dots i_{d}} = 0\quad \forall\, i_3,\dots, i_d =1,\dots, n$$
It is well-known that $\mathcal{H}_{d,n}$ is the eigenspace of the spherical Laplace operator $\Delta_{S^{n-1}}$ corresponding to the eigenvalue $-d(d+n-2)$. Functions in $\mathcal{H}_{d,n}$ are called \emph{spherical harmonics} of degree $d$ and the dimension of $\mathcal{H}_{d,n}$ is equal to
$$\txt{dim}\, \mathcal{H}_{d,n} = {n+d-1 \choose d} - {n+d-3 \choose d-2}\quad \txt{if} \quad d\geq 2\quad \txt{and} \quad \txt{dim}\, \mathcal{H}_{0,n} = 1,\quad \txt{dim}\,\mathcal{H}_{1,n} = n$$ 

For any point $y\in S^{n-1}$ and any $d$ there exists a spherical harmonic $Z^{y}_d \in \mathcal{H}_{d,n}$, called \textit{zonal}, which is invariant under rotations preserving $y$:
$$Z_d^{y}(Rx) = Z_d^{y}(x),\quad R \in \txt{SO}(n),\ Ry=y$$
The function $Z_d^y(x)$ is determined uniquely up to a constant and is proportional to $G_{d,n}(\langle x,y\rangle)$ \cite[Thm. 2.14]{SteWei},\footnote{According to the usual definition \cite[page 143]{SteWei} a zonal harmonic $Z_d^y$ is determined uniquely by some normalization condition. Since a normalization is unimportant for our purposes we abuse the terminology and call zonal any spherical harmonic with the mentioned invariance property.} where $\langle x, y\rangle = x_1y_1+\dots +x_ny_n$ is the standard scalar product in $\K{R}^n$ and $G_{d,n}$ is the \emph{Gegenbauer polynomial of degree $d$ and parameter $\frac{n-2}{2}$}. The polynomials $\{G_{d,n}\}_{d\geq 0}$ can be defined by the recurrence relation \cite[22.4.2, 22.7.3]{AbrSte}:
\begin{align*}
  G_{0,n}(x) &= 1,\\
  G_{1,n}(x) &= (n-2)x,\\
  G_{d,n}(x) &= \frac{1}{d}\left[2x(d+\frac{n}{2}-2)G_{d-1,n}(x) - (d+n-4) G_{d-2,n}(x) \right]
\end{align*}
and they form an orthogonal family on the interval $[-1,1]$ with respect to the measure $(1-z^2)^{\frac{n-3}{2}}\,dz$ \cite[22.2.3]{AbrSte}:
$$\int\limits_{-1}^1 G_{d_1,n}(z)\, G_{d_2,n}(z)\, (1-z^2)^{\frac{n-3}{2}} dz = 0,\quad d_1 \neq d_2$$
Therefore by \cite[Prop. I.1.1]{Far} $G_{d,n}$ has $d$ simple real roots in $(-1,1)$ and hence its derivative $G^{\prime}_{d,n}$ has $d-1$ roots in $(-1,1)$ which we denote by $\alpha_{d,1},\dots,\alpha_{d,d-1}$. 
The following lemma characterizes the critical points of a zonal spherical harmonic.
\begin{lemma}\label{zonal}
The set of critical points of $Z_d^y$ consists of $y, -y$ and $d-1$ affine hyperplane sections of the sphere
 $\{x\in S^{n-1}: \langle x, y \rangle = \alpha_{d,i}\},\ i=1,\dots,d-1$. The critical points $y$ and $-y$ are non-degenerate. 
\end{lemma}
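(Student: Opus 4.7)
The plan is to exploit the fact that $Z_d^y(x) = c\, G_{d,n}(\langle x,y\rangle)$ for some nonzero constant $c$, which reduces the entire critical point analysis to one-variable calculus in the scalar $t = \langle x,y\rangle$. First I would apply Lagrange multipliers: since $\nabla_x Z_d^y(x) = c\, G'_{d,n}(\langle x,y\rangle)\, y$ and the outward normal to $S^{n-1}$ at $x$ is $x$ itself, a point $x\in S^{n-1}$ is critical if and only if
\begin{align*}
c\, G'_{d,n}(\langle x,y\rangle)\, y = \mu\, x
\end{align*}
for some $\mu \in \K{R}$. This dichotomizes into two mutually exclusive cases. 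If $G'_{d,n}(\langle x,y\rangle) = 0$, then $\mu = 0$ and $x$ lies in one of the hyperplane sections $\{x \in S^{n-1}: \langle x,y\rangle = \alpha_{d,i}\}$, $i=1,\dots,d-1$. Otherwise $x$ must be collinear with $y$, forcing $x=\pm y$. The two cases are disjoint because the roots $\alpha_{d,i}$ all lie in the open interval $(-1,1)$, so $\pm 1$ are not among them.

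For the non-degeneracy claim at $\pm y$, I would pass to normal coordinates via the exponential map. At $y$, take $x(u) = \cos(\|u\|)\, y + \sin(\|u\|)\, u/\|u\|$ with $u$ ranging over $y^{\perp}$; then $\langle x(u), y\rangle = \cos\|u\|$, so
\begin{align*}
Z_d^y(x(u)) = c\, G_{d,n}(\cos\|u\|) = c\, G_{d,n}(1) - \tfrac{c}{2}\, G'_{d,n}(1)\, \|u\|^2 + O(\|u\|^4).
\end{align*}
Because the second-order part is a pure multiple of $\|u\|^2$, the Hessian of $Z_d^y$ at $y$ in these coordinates is $-c\, G'_{d,n}(1)\, I_{n-1}$. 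An analogous expansion at $-y$ (using $\langle x(u), y\rangle = -\cos\|u\|$) yields Hessian $c\, G'_{d,n}(-1)\, I_{n-1}$.

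The final step is to verify that $G'_{d,n}(\pm 1) \neq 0$. The polynomial $G'_{d,n}$ has degree $d-1$ and, as already recalled in the text above the lemma, admits the $d-1$ roots $\alpha_{d,1},\dots,\alpha_{d,d-1}$ inside $(-1,1)$; these exhaust all its roots, so $G'_{d,n}(\pm 1) \neq 0$. Both Hessians above are therefore nonzero multiples of the identity, hence non-singular, completing the proof. Conceptually nothing in this argument is deep; the only steps requiring mild care are reducing the Hessian of a rotationally symmetric function at the origin to a scalar multiple of the identity, and separating the parallel case $x\parallel y$ from the hyperplane-section case in the Lagrange condition.
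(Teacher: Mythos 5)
Your proof is correct and follows essentially the same route as the paper: identify critical points via the Lagrange condition $G'_{d,n}(\langle x,y\rangle)\,y \parallel x$, split into the hyperplane-section case and $x=\pm y$, then check non-degeneracy at $\pm y$ in a local chart. The only differences are that you use exponential-map (normal) coordinates where the paper uses graph coordinates $x\mapsto(x_1,\dots,x_{n-1},\pm\sqrt{1-\sum x_i^2})$ — both yield the Hessian $-c\,G'_{d,n}(\pm 1)\,I_{n-1}$ — and that you make explicit the fact $G'_{d,n}(\pm 1)\neq 0$ (all $d-1$ roots of $G'_{d,n}$ lie in $(-1,1)$), which the paper leaves as ``one can easily verify.''
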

\begin{proof}
A point $x\in S^{n-1}$ is critical for $G_{d,n}(\langle x,y \rangle)$ if and only if $G^{\prime}_{d,n}(\langle x,y\rangle)y$ is proportional to $x$. This is possible either if $\langle x,y\rangle$ is a root of $G^{\prime}_{d,n}$ or $x=\pm y$. To prove the non-degeneracy of $x=\pm y$ we assume without loss of generality that $y=(0,\dots,0,1)\in S^{n-1}$ and then in local coordinates 
$$(x_1,\dots,x_{n-1}) \mapsto \left(x_1,\dots,x_{n-1},\pm \sqrt{1-x_1^2-\dots-x_{n-1}^2}\right) \in S^{n-1}$$ around $x=\pm y$ our function $G_{d,n}(\langle x,y\rangle)$  takes the form $G_{d,n}\left(\pm \sqrt{1-x_1^2-\dots-x_{n-1}^2}\right)$. One can easily verify that its Hessian matrix at $(x_1,\dots,x_{n-1})=(0,\dots,0)$ is non-singular.
\end{proof}
The inclusion map
\begin{align*}
  i : \mathcal{P}_{d,n} &\hookrightarrow \mathcal{P}_{d,n+1}\\
  f &\mapsto i(f)(x_1,\dots,x_n,x_{n+1}) = f(x_1,\dots,x_{n})
\end{align*}
induces the linear inclusion
\begin{align*}
  \mathlarger{\hat{}}\,: \mathcal{H}_{d,n} &\hookrightarrow \mathcal{H}_{d,n+1}\\
  h = f|_{S^{n-1}} &\mapsto \hat{h} = i(f)|_{S^{n}}
\end{align*}
The critical points of $\hat{h}$ are described as follows.
\begin{lemma}\label{sectoral}
Assume that $d,n \geq 2$ and $h \in \mathcal{H}_{d,n}$. 
\begin{enumerate}[label=(\roman*)]
\item If the zero locus $\{h=0\} \subset S^{n-1}$ is regular then the set of critical points of $\hat{h} \in \mathcal{H}_{d,n+1}$ consists of $\pm(0,\dots,0,1) \in S^{n}$ and the points $(x_1,\dots,x_n,0)$, where $(x_1,\dots,x_n) \in S^{n-1}$ is critical for $h$. Moreover, for $d\geq 3$ the points $\pm(0,\dots,0,1)\in S^n$ are always degenerate.

\item If $\{h=0\}$ is singular then, additionally, for each singular point $(x_1,\dots,x_{n}) \in \{h=0\}$ the great circle $\{(tx_1,\dots,tx_n,\pm\sqrt{1-t^2}):\, 0\leq t \leq 1\} \subset S^n$ consists of critical points of $\hat{h}$. 
\end{enumerate}
\end{lemma}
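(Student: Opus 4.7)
The plan is to apply the method of Lagrange multipliers to characterize the critical points of $\hat h$ on $S^n$. Writing $\hat h(x_1,\ldots,x_{n+1}) = f(x_1,\ldots,x_n)$ where $f\in\mathcal{P}_{d,n}$ is the homogeneous polynomial whose restriction to $S^{n-1}$ is $h$, the Lagrange condition $\nabla\hat h(x) = \mu\,x$ at a point $x=(x_1,\ldots,x_{n+1})\in S^n$ becomes the pair
$$\nabla f(x_1,\ldots,x_n) = \mu\,(x_1,\ldots,x_n),\qquad \mu\, x_{n+1}=0.$$
The second equation forces either $x_{n+1}=0$ or $\mu=0$, and I would split the analysis accordingly.

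The case $x_{n+1}=0$ immediately recovers exactly the condition for $(x_1,\ldots,x_n)\in S^{n-1}$ to be a critical point of $h$, giving the ``equatorial'' critical points of (i). In the case $\mu=0$ we have $\nabla f(x_1,\ldots,x_n)=0$, and Euler's relation $d\cdot f=\sum x_i\,\partial_i f$ implies $f(x_1,\ldots,x_n)=0$ as well. If $(x_1,\ldots,x_n)=0$ then $x_{n+1}=\pm 1$, recovering the two poles. Otherwise, since $\nabla f$ is homogeneous of degree $d-1\geq 1$, it vanishes on the entire ray through $(x_1,\ldots,x_n)$; normalising to $S^{n-1}$ produces a point $y$ with $h(y)=0$ and $\nabla f(y)=0$, which (by the same Euler-plus-Lagrange argument applied one dimension down) is precisely a singular point of $\{h=0\}\subset S^{n-1}$. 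Under the regularity hypothesis of (i) no such $y$ exists and the description is complete; under the singularity hypothesis of (ii), each singular point $y\in\{h=0\}$ produces a whole family $(ty_1,\ldots,ty_n,\pm\sqrt{1-t^2})$, $t\in[0,1]$, of critical points of $\hat h$, which is the half great circle claimed.

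For the non-degeneracy statement for $d\geq 3$, I would work in the local chart $(x_1,\ldots,x_n)\mapsto(x_1,\ldots,x_n,\pm\sqrt{1-x_1^2-\cdots-x_n^2})$ around the pole $\pm(0,\ldots,0,1)$, in which $\hat h$ is literally $f(x_1,\ldots,x_n)$. Since $f$ is homogeneous of degree $d$, its second partial derivatives are homogeneous of degree $d-2\geq 1$, hence they all vanish at the origin. Thus the Hessian of $\hat h$ at either pole is the zero matrix, in particular singular.

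I do not anticipate a serious obstacle: the proof is essentially a single Lagrange multiplier computation combined with Euler's identity and a one-line homogeneity observation for the Hessian. The only point requiring mild care is the identification of singular points of the hypersurface $\{h=0\}\subset S^{n-1}$ with zeros of $\nabla f$ on $S^{n-1}$, which is needed both to rule out extra critical points in case (i) and to index the additional great-circle families in case (ii).
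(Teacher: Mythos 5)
Your proof takes exactly the same route as the paper's (the Lagrange-multiplier system, the dichotomy $x_{n+1}=0$ versus $\mu=0$, homogeneity to extend a singular point of $\{h=0\}$ to the whole great circle) and is correct. In fact your write-up is more complete than the paper's on two points the paper leaves implicit or unproved: you supply the Euler-identity argument showing that $\mu=0$ together with regularity of $\{h=0\}$ forces $(x_1,\dots,x_n)=0$ (via the identification of singular points of $\{h=0\}\subset S^{n-1}$ with zeros of $\nabla f$ on $S^{n-1}$), and you actually prove the degeneracy of the poles for $d\geq 3$ by noting the Hessian of $f$ at the origin vanishes identically since its entries are homogeneous of degree $d-2\geq 1$ — a claim the paper states in the lemma but never proves.
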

\begin{proof}
If $h = f|_{S^{n-1}}$ for some harmonic polynomial $f \in \mathcal{P}_{d,n}$ the critical points of $\hat{h}=i(f)|_{S^{n}}\in \mathcal{H}_{d,n+1}$ are characterized by
\begin{align}\label{system for sectoral}
\frac{\partial f}{\partial x_1} = \lambda x_1, \quad \dots \quad \frac{\partial f}{\partial x_n} = \lambda x_n, \quad \frac{\partial f}{\partial x_{n+1}} = 0 = \lambda x_{n+1}
\end{align}
Obviously $(x_1,\dots,x_n,0) \in S^n$ is a critical point of $\hat{h}$ if $(x_1,\dots,x_n)\in S^{n-1}$ is critical for $h$. Now if $\lambda=0$ and $\{h=0\}\subset S^{n-1}$ is regular then $x_1=\dots=x_n=0$ and $x_{n+1}=\pm 1$. If, instead, $\{h=0\}$ is singular and $(x_1,\dots,x_n) \in \{h=0\}$ is a solution of $\frac{\partial f}{\partial x_1} = \dots = \frac{\partial f}{\partial x_n} = 0$ then due to the homogeneity of $f$ any point $(tx_1,\dots,tx_n, \pm\sqrt{1-t^2}),\, 0\leq t \leq 1$ is a solution of the system \eqref{system for sectoral} with $\lambda=0$.
\end{proof}
\section{Proof of Theorem \ref{thm 1}}
Denote by $Z_{d,n}$ a zonal spherical harmonic $Z_d^y(x) = G_{d,n}(\langle x, y\rangle) = G_{d,n}(x_n) \in \mathcal{H}_{d,n}$ corresponding to the point $y=(0,\dots,0,1) \in S^{n-1}$ and let $M_{d,n}\in \mathcal{H}_{d,n}$ be any Morse spherical harmonic with the maximum possible number of critical points. Note that by Lemma \ref{bound} this number is bounded by $2m_{d,n} = 2((d-1)^n-1)/(d-2)$. In dimension $n=2$ any $h \in \mathcal{H}_{d,2}$ is just a trigonometric polynomial $$h = a \cos (d\theta) + b \sin (d\theta),\quad a,b \in \K{R},\quad \theta \in [0,2\pi)$$ and hence it is a Morse function on $S^1$ with $2m_{d,2}=2d$ critical points. For $n, d\geq 3$ the number of critical points of a general spherical harmonic $h \in \mathcal{H}_{d,n}$ is not anymore a constant and depends significantly on the choice of $h$. In the proposition below we exhibit for any $n,d\geq 2$ a Morse spherical harmonic $M_{d,n}\in \mathcal{H}_{d,n}$ having $2m_{d,n}$ critical points. In fact, we construct $M_{d,n}$ by induction on $n$ starting from a trigonometric polynomial $M_{d,2}\in \mathcal{H}_{d,2}$.  
\begin{propo}\label{propo 1}
  For any $d, n\geq 2$  and a sufficiently small $\varepsilon>0$ the spherical harmonic $M_{d,n+1} := Z_{d,n+1} + \varepsilon\, \hat{M}_{d,n} \in \mathcal{H}_{d,n+1}$ is a Morse function on $S^{n}$ with $2m_{d,n+1}$ critical points.
\end{propo}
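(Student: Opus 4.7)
The plan is to view $Z_{d,n+1} + \varepsilon \hat{M}_{d,n}$ as a small perturbation of the zonal harmonic $Z_{d,n+1}$ and to invoke a Morse--Bott perturbation argument. By Lemma \ref{zonal}, the critical set of $Z_{d,n+1}$ on $S^n$ consists of the two poles $\pm(0,\dots,0,1)$ (already non-degenerate) together with $d-1$ parallel ``latitude'' spheres $N_i := \{x \in S^n : x_{n+1} = \alpha_{d,i}\} \cong S^{n-1}$. First I would check that each $N_i$ is a non-degenerate critical submanifold in the Morse--Bott sense. This reduces to showing $G''_{d,n+1}(\alpha_{d,i}) \neq 0$, which follows from the Gegenbauer ODE satisfied by $G_{d,n+1}$: if both $G'_{d,n+1}$ and $G''_{d,n+1}$ vanished at $\alpha_{d,i} \in (-1,1)$, the ODE would force $G_{d,n+1}(\alpha_{d,i}) = 0$, contradicting the fact that the simple zeros of $G'_{d,n+1}$ are interlaced with those of $G_{d,n+1}$ by Rolle's theorem.

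Next I would describe the restriction of the perturbation to each $N_i$. Parametrizing $N_i$ as $(\sqrt{1-\alpha_{d,i}^2}\,z,\, \alpha_{d,i})$ with $z \in S^{n-1}$ and using the $d$-homogeneity of $M_{d,n}$, one computes
\[\hat{M}_{d,n}\bigl(\sqrt{1-\alpha_{d,i}^2}\,z,\, \alpha_{d,i}\bigr) = (1-\alpha_{d,i}^2)^{d/2}\, M_{d,n}(z),\]
so, up to the strictly positive constant $(1-\alpha_{d,i}^2)^{d/2}$, the restriction $\hat{M}_{d,n}|_{N_i}$ coincides with $M_{d,n}$ on $S^{n-1}$. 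By the inductive hypothesis (with the trigonometric polynomial $M_{d,2}$ on $S^1$ as base case) this restriction is a Morse function on $N_i$ with exactly $2m_{d,n}$ critical points.

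I would then invoke the standard Morse--Bott perturbation statement. Compactness of $S^n$ together with a positive lower bound on $\|\nabla Z_{d,n+1}\|$ off a fixed tubular neighbourhood of the critical set confines, for $\varepsilon$ small enough, the critical points of $Z_{d,n+1} + \varepsilon \hat{M}_{d,n}$ to arbitrarily small neighbourhoods of the poles and of the $N_i$'s. Near each pole the implicit function theorem produces a unique nearby non-degenerate critical point. Near each $N_i$, splitting into tangential and normal coordinates and using the non-degenerate normal Hessian from the first step, a second implicit function argument reduces the critical-point equation to an $O(\varepsilon)$ perturbation of $\nabla_{N_i}\bigl(\hat{M}_{d,n}|_{N_i}\bigr) = 0$, so that $N_i$ contributes exactly $2m_{d,n}$ non-degenerate critical points. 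The total count is
\[2 + (d-1)\cdot 2m_{d,n} \;=\; 2\bigl(1 + (d-1) m_{d,n}\bigr) \;=\; 2m_{d,n+1}.\]

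The delicate point is the non-degeneracy of the normal Hessian along the latitude spheres $N_i$, since without it the splitting argument collapses and the perturbation could create or annihilate critical points in unpredictable numbers; this is precisely what the Gegenbauer ODE guarantees. A secondary subtlety is that $\hat{M}_{d,n}$ on the ambient $S^n$ may itself be badly degenerate (Lemma \ref{sectoral}(ii) even allows entire great circles of critical points if $\{M_{d,n}=0\}$ is singular), but this plays no role here: $\hat{M}_{d,n}$ enters only as a small additive perturbation, and only its restriction to each $N_i$, where it is a positive rescaling of the Morse function $M_{d,n}$, enters the final count.
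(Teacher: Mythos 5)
Your proposal is correct, and the underlying mechanism is the same as the paper's: the critical set of $Z_{d,n+1}$ splits into two non-degenerate poles plus $d-1$ latitude spheres $N_i$; the perturbation $\hat{M}_{d,n}$ restricts on each $N_i$ to a positive multiple of $M_{d,n}$ by $d$-homogeneity, which by induction is Morse with $2m_{d,n}$ critical points; and the normal direction is controlled by the simple zeros $\alpha_{d,i}$ of $G'_{d,n+1}$. Where you differ is in the packaging. The paper works explicitly in spherical coordinates $\theta_1,\dots,\theta_n$, in which the critical-point equations and the Hessian of $Z_{d,n+1}+\varepsilon\hat{M}_{d,n}$ visibly decouple into an $(n-1)\times(n-1)$ tangential block proportional to $\varepsilon\sin^d\theta_n$ times the Hessian of $M_{d,n}$, and a scalar $\theta_n$-block whose leading part is $\frac{\partial^2}{\partial\theta_n^2}G_{d,n+1}(\cos\theta_n)$; non-degeneracy of both blocks, and the count of critical points, then fall out directly. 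You instead phrase the same splitting in the abstract Morse--Bott perturbation framework (non-degenerate critical submanifolds, localization by compactness, implicit-function splitting into normal and tangential parts), which is equivalent but requires invoking or reproving a general perturbation theorem that the paper's coordinate computation makes transparent. One small deviation: you justify the non-degeneracy of the normal Hessian, i.e.\ the simplicity of the zeros of $G'_{d,n+1}$, via the Gegenbauer ODE; the paper obtains this more cheaply from the fact that $G_{d,n+1}$ has $d$ simple roots in $(-1,1)$, so Rolle together with a degree count already forces $G'_{d,n+1}$ to have $d-1$ simple roots there. Both arguments are valid; yours is an independent but slightly heavier check.
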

\begin{proof}
As observed above one can take $M_{d,2} = a \cos(d \theta) + b \sin(d \theta)$. Suppose that for some $n\geq 2$, we have already constructed a Morse spherical harmonic $M_{d,n}\in \mathcal{H}_{d,n}$ with $2m_{d,n}$ critical points on $S^{n-1}$. By Lemmas \ref{zonal} and \ref{sectoral} we have that the points $\pm(0,\dots,0,1) \in S^n$ are critical for both $Z_{d,n+1}$ and $\hat{M}_{d,n}$ and hence also for the perturbation $Z_{d,n+1}+\varepsilon \hat{M}_{d,n}$. Since the points $\pm(0,\dots,0,1)$ are non-degenerate for $Z_{d,n+1}$ they remain non-degenerate for the perturbation for small enough $\varepsilon>0$.

We prove that each of the $d-1$ critical circles $\{x\in S^{n}: \langle x, y \rangle = \alpha_{d,i}\},\ i=1,\dots,d-1$ of $Z_{d,n+1}$ breaks into $2m_{d,n}$ non-degenerate critical points when $Z_{d,n+1}$ is slightly perturbed by $\hat{M}_{d,n}$. The idea is shown on Figure \ref{fig:pert}, where the red/purple color represents positive/negative values of functions. In spherical coordinates
\begin{align*}
  x_1 &= \sin \theta_n\cdot \tilde{x}_1 = \sin \theta_{n} \sin \theta_{n-1} \cdots \sin \theta_2 \sin \theta_1\\
  x_2 &= \sin \theta_n \cdot \tilde{x}_2 = \sin \theta_{n} \sin \theta_{n-1} \cdots \sin \theta_2 \cos \theta_1\\
  x_3 &= \sin \theta_n \cdot \tilde{x}_3 = \sin \theta_{n} \sin \theta_{n-1} \cdots \cos \theta_2\\
  &\ \ \vdots\\
  x_n &= \sin \theta_n \cdot \tilde{x}_n = \sin \theta_n \cos \theta_{n-1}\\
  x_{n+1} &= \cos \theta_{n}
\end{align*}
on $S^n$, where $(\tilde{x}_1,\dots,\tilde{x}_n) \in S^{n-1}=\{x \in S^n: x_{n+1} =0\}$, we have 
\begin{align*}
Z_{d,n+1}(x_1,\dots,x_{n+1}) &= G_{d,n+1}(x_{n+1}) = G_{d,n+1}(\cos \theta_n)\\
\hat{M}_{d,n}(x_1,\dots,x_{n+1}) &= \sin^d \theta_n\, M_{d,n}(\tilde{x}_1,\dots, \tilde{x}_n)
\end{align*}
\begin{figure}[!htb]\label{fig:pert}
\minipage{0.27\textwidth}
  \includegraphics[width=\linewidth]{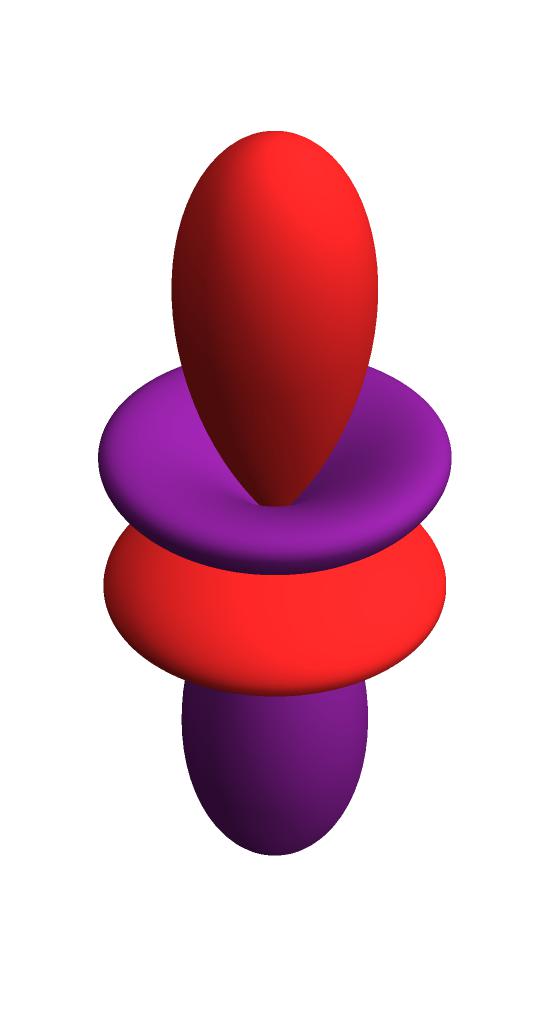}
\endminipage\hfill
\minipage{0.32\textwidth}
\hbox{\hspace{-0.1cm}  \includegraphics[width=1.3\textwidth]{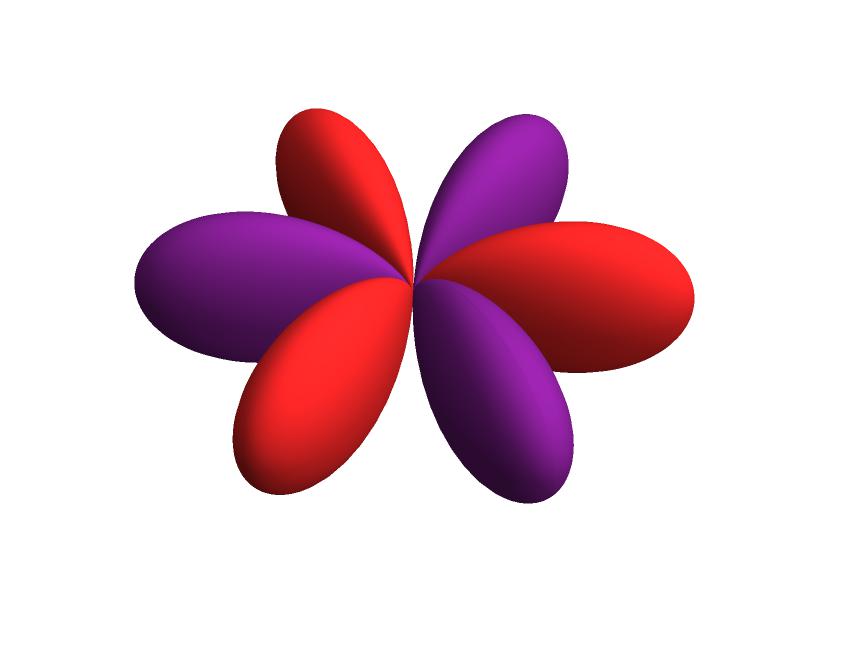}}
\endminipage\hfill
\minipage{0.312\textwidth}%
\vbox{\vspace{0.25cm}  \includegraphics[width=\linewidth]{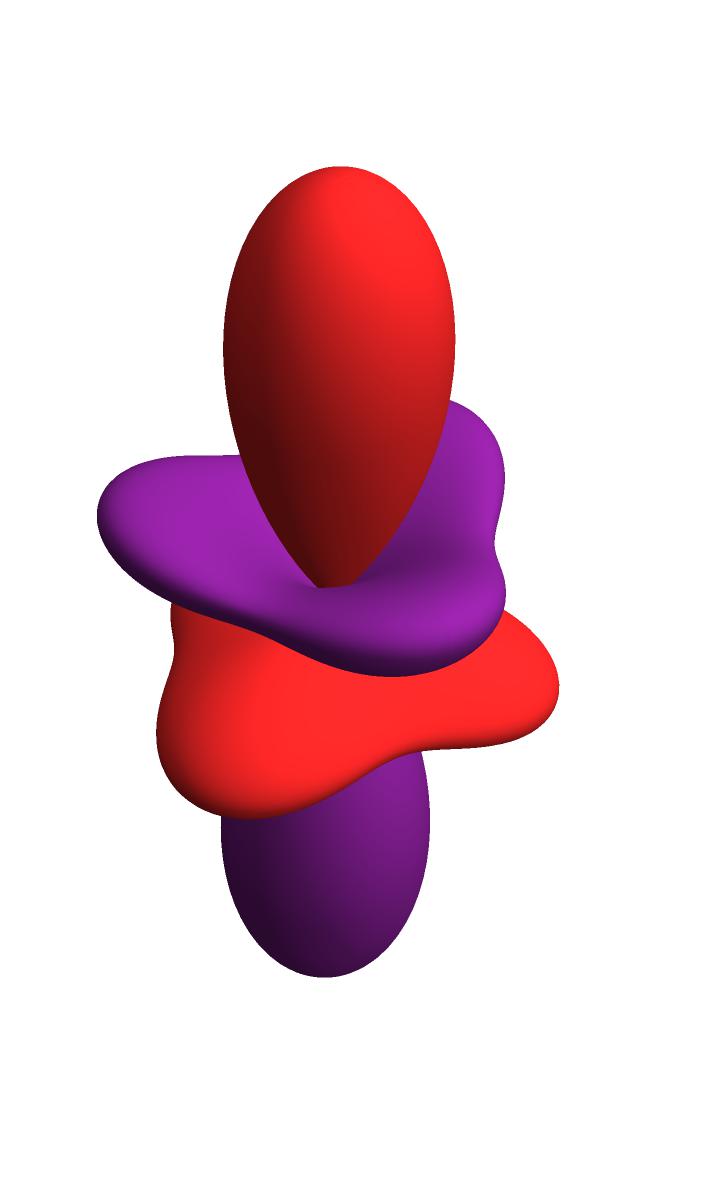}}
\endminipage
\vspace{-0.5cm}
\caption{Plots of the absolute values on the sphere $S^2$ of a zonal harmonic $Z_{3,3}$ (left), a function $\hat{M}_{3,2}$ with $6$ critical points on the circle (middle) and a perturbation of $Z_{3,3}$ by $\hat{M}_{3,2}$ with $14=2+2\cdot 6$ non-degenerate critical points (right).}
\end{figure}

\noindent
and hence the critical points of $Z_{d,n+1}+\varepsilon \hat{M}_{d,n}$ are described by the equations
\begin{align}
\varepsilon\, \sin^d \theta_n\, \frac{\partial}{\partial \theta_1} M_{d,n}(\tilde{x}_1,\dots,\tilde{x}_n) &= 0\nonumber \\
&\vdots\nonumber \\
\varepsilon\, \sin^d \theta_n\, \frac{\partial}{\partial \theta_{n-1}} M_{d,n}(\tilde{x}_1,\dots,\tilde{x}_n) &= 0\nonumber \\
\frac{\partial}{\partial \theta_n}\left[ G_{d,n+1}(\cos \theta_n) + \varepsilon\, \sin^{d} \theta_n \,M_{d,n}(\tilde{x}_1,\dots,\tilde{x}_n) \right]&= 0 \label{last eq}
\end{align}
Since the $d-1$ zeroes of $G^{\prime}_{d,n+1}$ are non-degenerate, then for a fixed $(\tilde{x}_1,\dots,\tilde{x}_n) \in S^{n-1}$ the equation \eqref{last eq} has $d-1$ non-degenerate solutions provided that $\varepsilon$ is small enough.
It follows that each critical point $(\tilde{x}_1,\dots,\tilde{x}_n) \in S^{n-1}$ of $M_{d,n}$ gives rise to $d-1$ critical points of $Z_{d,n+1} + \varepsilon \hat{M}_{d,n}$.
In spherical coordinates the Hessian matrix of $Z_{d,n+1} + \varepsilon \hat{M}_{d,n}$ computed at a critical point $\theta =(\theta_1,\dots,\theta_{n-1},\theta_n)$ has the block-diagonal form:
\begin{align*}
  \begin{pmatrix}
    \varepsilon\, \sin^d \theta_n \frac{\partial^2 M_{d,n}}{\partial \theta_1^2}(\theta)  & \dots & \varepsilon\, \sin^d \theta_n \frac{\partial^2 M_{d,n}}{\partial \theta_1\partial \theta_{n-1}}(\theta)  & 0 \\
    \vdots & \ddots & \vdots & \vdots \\
    \varepsilon\, \sin^d \theta_n \frac{\partial^2 M_{d,n}}{\partial \theta_{n-1}\partial \theta_{1}}(\theta) & \dots & \varepsilon\, \sin^d \theta_n \frac{\partial^2 M_{d,n}}{\partial \theta^2_{n-1}}(\theta)  & 0 \\ 
 0 & \dots & 0 & \frac{\partial^2}{\partial \theta_n^2} \left[ G_{d,n+1}(\cos \theta_n) + \right.\\
&&&\hspace{1cm} \left.\varepsilon\, \sin^{d} \theta_n \,M_{d,n}(\tilde{x}_1,\dots,\tilde{x}_n) \right]
  \end{pmatrix}
\end{align*}
It is non-singular since the function $M_{d,n}$ is, by assumption, Morse and for a small $\varepsilon$ the solutions of \eqref{last eq} are non-degenerate. Thus, the function $Z_{d,n+1}+\varepsilon \hat{M}_{d,n}$ has $2+(d-1)\cdot 2((d-1)^n-1)/(d-2) = 2((d-1)^{n+1}-1)/(d-2)=2m_{d,n+1}$ non-degenerate critical points.
\end{proof}
\noindent
Theorem \ref{thm 1} follows from the proposition. 
\begin{remark}
  The described construction of homogeneous polynomials (spherical harmonics) with maximum finite number of critical points on the sphere can be generalized as follows. Instead of a spherical harmonic $M_{d,n}$ one can take any homogeneous polynomial $f=f(x_1,\dots,x_n)\in \mathcal{P}_{d,n}$ having $2m_{d,n}$ critical points on $S^{n-1}$ and instead of the Gegenbauer polynomial $G_{d,n+1}=G_{d,n+1}(x_{n+1})$ one can take any even (for even $d$) or odd (for odd $d$) univariate degree $d$ polynomial $p=p(x_{n+1})$ whose derivative $p^{\prime} = p^{\prime}(x_{n+1})$ has $d-1$ simple roots in $(-1,1)$. Then for a small $\varepsilon$ the function $p(x_{n+1})+\varepsilon f(x_1,\dots,x_n) \in \mathcal{P}_{d,n+1}|_{S^{n}}$ has $2m_{d,n+1}$ critical points on $S^{n}$. 
\end{remark}

\section*{Acknowledgements}
I am grateful to Andrei Agrachev and Antonio Lerario for fruitful discussions, to Bernd Sturmfels for useful comments and suggestions and to Shamil Asgarli for reading the manuscript and giving a feedback.

\bibliographystyle{plain}

\begin{thebibliography}{10}

\bibitem{ASS}
H.~Abo, A.~Seigal, and B.~Sturmfels.
\newblock Eigenconfigurations of tensors.
\newblock {\em Algebraic and Geometric Methods in Discrete Mathematics,
  Contemporary Mathematics}, 685:1--25, 2017.

\bibitem{AbrSte}
M.~Abramowitz and I.~Stegun.
\newblock {\em Handbook of Mathematical Functions, With Formulas, Graphs, and
  Mathematical Tables}.
\newblock Dover, New York, 9th edition, 1964.

\bibitem{BanHur}
A.~Banyaga and D.~Hurtubise.
\newblock {\em Lectures on {M}orse homology}, volume~29 of {\em Kluwer Texts in
  the Mathematical Sciences}.
\newblock Kluwer Academic Publishers Group, Dordrecht, 2004.

\bibitem{CarStu}
D.~Cartwright and B.~Sturmfels.
\newblock The number of eigenvalues of a tensor.
\newblock {\em Linear Algebra and its Applications}, 438:942--952, 2013.

\bibitem{Chen}
B.~Chen, S.~He, Z.~Li, and S.~Zhang.
\newblock Maximum block improvement and polynomial optimization.
\newblock {\em SIAM J. OPTIM.}, 22:87--107, 2012.

\bibitem{Far}
J.~Faraut.
\newblock Lecture notes on logarithmic potential theory, orthogonal
  polynomials, and random matrices.
\newblock {\em CIMPA School}, 2011.

\bibitem{FS}
J.E. Fornaess and N.~Sibony.
\newblock Complex dynamics in higher dimensions \textrm{I}.
\newblock {\em Ast\'{e}risque}, 222:201--231, 1994.

\bibitem{FO}
S.~Friedland and G.~Ottaviani.
\newblock The number of singular vector tuples and uniqueness of best rank-one
  approximation of tensors.
\newblock {\em Found. Comput. Math.}, 14:1209--1242, 2014.

\bibitem{Gichev}
V.~M. Gichev.
\newblock Some remarks on spherical harmonics.
\newblock {\em St. Petersburg Math. J.}, 20:553--567, 2009.

\bibitem{GK}
L.~Grasedyck, D.~Kressner, and C.~Tobler.
\newblock A literature survey of low-rank tensor approximation techniques.
\newblock {\em GAMM-Mitteilungen}, 36:53--78, 2013.

\bibitem{KR}
E.~Kofidis and P.~A. Regalia.
\newblock Tensor approximation and signal processing applications.
\newblock {\em Structured Matrices in Mathematics, Computer Science and
  Engineering}, I, 2001.

\bibitem{Lim}
L.~H. Lim.
\newblock Singular values and eigenvalues of tensors: a variational approach.
\newblock {\em Proceedings of the IEEE International Workshop on Computational
  Advances in Multi-Sensor Adaptive Processing}, 1:129--132, 2005.

\bibitem{Qi}
L.~Qi.
\newblock Eigenvalues of a real supersymmetric tensor.
\newblock {\em J. Symbolic Computation}, 40:1302--1324, 2005.

\bibitem{QiLuo}
L.~Qi and Z.~Luo.
\newblock {\em Tensor analysis: spectral theory and special tensors}.
\newblock Society for Industrial and Applied Mathematics, Philadelphia, 2017.

\bibitem{Rob}
J.~W. Robertson.
\newblock Complex dynamics in higher dimensions.
\newblock {\em Doctoral dissertation, University of Michigan}, 2000.

\bibitem{SteWei}
E.~M. Stein and G.~Weiss.
\newblock {\em Introduction to Fourier Analysis on Euclidean Spaces (PMS-32)}.
\newblock Princeton University Press, Princeton, New Jersey, 1971.

\bibitem{Stu}
B.~Sturmfels.
\newblock Tensors and their eigenvectors.
\newblock {\em Notices of the American Mathematical Society}, 63:604--606,
  2016.

\end{thebibliography}

\bigskip{\footnotesize%
  \textsc{SISSA, via Bonomea 265, 34136 Trieste, Italy}\par  
  \textit{E-mail address}: \texttt{kkozhasov@sissa.it}}
\end{document}